\newtheorem{theorem}{Theorem}           
\newtheorem{lemma}[theorem]{Lemma}
\theoremstyle{definition}
\newtheorem{remark}[theorem]{Remark}
\newtheorem{proposition}[theorem]{Proposition}
\DeclareMathOperator\trace{trace}
\renewcommand{\Re}{\operatorname{Re}}
\renewcommand{\Im}{\operatorname{Im}}
\newcommand{\R}{\mathbb{R}}
\newcommand{\C}{\mathbb{C}}
\newcommand{\cfreq}{s}
\author[Anna Muranova]{Anna Muranova*}
\author[Robert Schippa]{Robert Schippa}
\address{Anna Muranova: 5050 Institut f\"ur discrete Mathematik, Steyrergasse 30/III,
8010 Graz, Austria} 
 \email{anna.muranova@gmail.com, muranova@math.tugraz.at}
 \address{Robert Schippa: Fakult\"{a}t f\"{u}r Mathematik, Karlsruher Institut f\"{u}r Technologie, Englerstrasse 2, 76131 Karlsruhe, Germany} 
 \email{robert.schippa@kit.edu}
\title[Complex Laplacian]{Eigenvalues of the normalized complex Laplacian on finite electrical networks}
\thanks{*Corresponding author}
\begin{document}

\maketitle



\begin{abstract}
The spectrum of the normalized complex Laplacian for electrical networks is analyzed. We show that eigenvalues lie in a larger region compared to the case of the real Laplacian. We show the existence of eigenvalues with negative real part and absolute value greater than $2$. An estimate from below for the first non-vanishing eigenvalue in modulus is provided. We supplement the estimates with examples, showing sharpness.
\end{abstract}

{\footnotesize
{\bf Keywords:} {finite weighted graphs, complex Laplacian, electrical networks, spectral estimates} 
\smallskip

{\bf Mathematics Subject Classification 2010:}{ 05C22, 05C50, 39A12, 34L15} 
}

\section{Introduction}

The purpose of this note is to analyze the complex Laplacian for a class of finite weighted graphs. In the following we consider finite \emph{electrical networks} (cf. \cite{Muranova3,Muranova1,MuranovaThesis}). These consist of finite sets of vertices and edges $(V,E)$, where loops are excluded. For any edge $xy$, $x,y \in V$, there are non-negative real numbers $L_{xy}$, $D_{xy}$, $R_{xy}$ satisfying
\begin{equation*}
L_{xy} + D_{xy} + R_{xy} > 0.
\end{equation*}
For $s \in \C$ with $\Re s > 0$, we consider the finite weighted graph $(V,E)$ with weights given by the \emph{admittances}
\begin{equation}
\label{eq:admittance}
\rho^{(s)}(x,y) = \frac{s}{L_{xy}s^2 + R_{xy} s + D_{xy}}.
\end{equation}
Since $s$ will be clear from context, we write $\rho_{xy}$ for $\rho^{(s)}(x,y)$ for the sake of brevity.
We set $\rho_{xy}\equiv 0$ if $xy$ is not an edge. Then, any electrical network is uniquely determined by the pair $(V,\rho)$.
 Let
\begin{equation*}
\rho(x) = \sum_{y\in V} \rho_{xy}.
\end{equation*}
%
%
The admittance in form \eqref{eq:admittance} corresponds to the case of electrical networks with passive elements (coils, capacitors, and resistors). In this case $\cfreq$ corresponds to a complex frequency  (cf. \cite{Brune, Feynman1, Feynman2}).
We consider the normalized complex-weighted Laplacian:
\begin{equation}\label{Delta}
\widetilde \Delta_\rho f(x)=f(x)-\dfrac{1}{\rho(x)}\sum_{y\in V} f(y)\rho_{xy}.
\end{equation}
Set ${\rho_{xy}=\tau_{xy}+i\sigma_{xy}}$, where $\tau_{xy}, \sigma_{xy} \in \R$ and ${\rho(x)=\tau(x)+i\sigma(x)}$. Clearly, $\tau(x)=\sum_{y}\tau_{xy}$ and $\sigma(x)=\sum_{y}\sigma_{xy}$. 

Since admittance is positive real function (cf. \cite{Brune, Muranova3, MuranovaThesis}), i.e., $\Re \cfreq >0$, whenever $\rho^{(s)}_{xy}>0$ (in particular, $\rho^{(s)}_{xy}$ has no poles), the normalized complex-weighted Laplacian is well-defined. For the sake of completeness, we show this in Proposition \ref{prop:Rerho}.

The real Laplacian ($L_{xy}=D_{xy}=0$ for any $x,y$)  arises in both electrical networks with resistors and random walks (cf. \cite{DoyleSnell, Grigoryan, Levin, Woess}). The complex Laplacian arises in AC electrical networks and corresponds to Kirchoff's law (cf. \cite{Feynman1, Hughes, Muranova1, MuranovaThesis}). Note that if $s \in \R^+$, then the resulting Laplacian is still real and corresponds to some random walk.

For $s \in \R^+$ the resulting Laplacian is a self-adjoint, non-negative operator acting on a Hilbert space with scalar product depending on $\rho$. The real Laplacian is studied extensively (cf. \cite{ BauerJost, Chung, Grigoryan}). In this case, the eigenvalues lie in the interval $[0,2]$. In Section \ref{section:Preliminaries} we recall among basic facts how $2$ is attained as an eigenvalue for bipartite graphs.

In Section \ref{section:EigenvalueRegions} we show the following generalization:
\begin{theorem}
\label{thm:EigenvaluesAroundOne}
Let $\lambda$ denote an eigenvalue of \eqref{Delta}. Then, we find the following estimate to hold:
\begin{equation}
\label{eq:EigenvalueEstimateII}
|1 - \lambda| \leq \left( \frac{|s|}{\Re s} \right).
\end{equation}
\end{theorem}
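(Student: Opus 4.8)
The plan is to combine a maximum-principle (Gershgorin-type) argument with a geometric observation about the arguments of the admittances. First I would rewrite the eigenvalue equation $\widetilde\Delta_\rho f = \lambda f$ for a nontrivial eigenfunction $f$ as
\begin{equation*}
(1-\lambda) f(x) = \frac{1}{\rho(x)} \sum_{y \in V} f(y) \rho_{xy}, \qquad x \in V.
\end{equation*}
Choosing a vertex $x_0$ at which $|f|$ attains its maximum (so that $f(x_0) \neq 0$) and applying the triangle inequality together with $|f(y)| \leq |f(x_0)|$ yields, after dividing by $|f(x_0)|$,
\begin{equation*}
|1-\lambda| \leq \frac{1}{|\rho(x_0)|} \sum_{y \in V} |\rho_{x_0 y}|.
\end{equation*}
It therefore suffices to bound $\sum_{y} |\rho_{xy}| \big/ |\rho(x)|$ by $|s|/\Re s$ for every vertex $x$.

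The heart of the matter is a bound on the argument of each admittance. Writing $s = |s| e^{i\vartheta}$ with $\vartheta = \arg s \in (-\pi/2, \pi/2)$ (as $\Re s > 0$), I would show that $|\arg \rho_{xy}| \leq |\vartheta|$. Since $\rho_{xy} = s / (L_{xy} s^2 + R_{xy} s + D_{xy})$, it is enough to control the denominator. For $\vartheta > 0$ the three summands $L_{xy} s^2$, $R_{xy} s$, $D_{xy}$ have nonnegative moduli and arguments $2\vartheta$, $\vartheta$, $0$, all lying in the cone $[0, 2\vartheta]$ of opening $2\vartheta < \pi$; hence their (nonzero, by Proposition \ref{prop:Rerho}) sum also has argument in $[0, 2\vartheta]$, and consequently
\begin{equation*}
\arg \rho_{xy} = \vartheta - \arg\left( L_{xy} s^2 + R_{xy} s + D_{xy} \right) \in [-\vartheta, \vartheta].
\end{equation*}
The case $\vartheta < 0$ follows by conjugation, since $L_{xy}, R_{xy}, D_{xy}$ are real.

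With this in hand the conclusion is quick. Because $|\arg \rho_{xy}| \leq |\vartheta| < \pi/2$ and cosine is even and decreasing on $[0,\pi/2]$, each admittance satisfies $\Re \rho_{xy} = |\rho_{xy}| \cos(\arg \rho_{xy}) \geq \cos\vartheta \, |\rho_{xy}|$, so that
\begin{equation*}
|\rho(x)| \geq \Re \rho(x) = \sum_{y} \Re \rho_{xy} \geq \cos\vartheta \sum_{y} |\rho_{xy}|.
\end{equation*}
Dividing gives $\sum_y |\rho_{xy}| / |\rho(x)| \leq 1/\cos\vartheta = |s|/\Re s$, which combined with the previous estimate proves \eqref{eq:EigenvalueEstimateII}. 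The one genuinely nontrivial step is the argument bound $|\arg \rho_{xy}| \leq |\arg s|$; once the admittances are confined to the cone $|\arg z| \leq |\vartheta|$ about the positive real axis, projecting onto that axis is precisely what converts the sum of moduli in the numerator into the single factor $\sec\vartheta = |s|/\Re s$.
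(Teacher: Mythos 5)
Your proposal is correct and follows essentially the same route as the paper: the identical maximum-modulus (Gershgorin-type) step at a vertex where $|f|$ peaks reduces the claim to $\sum_y |\rho_{xy}| \le \frac{|s|}{\Re s}\,\Re\rho(x) \le \frac{|s|}{\Re s}\,|\rho(x)|$, which is precisely the paper's Proposition \ref{prop:Rerho}(2). The only difference is in how that key sub-inequality is obtained: you prove $|\arg \rho_{xy}| \le |\arg s|$ via the convex-cone observation on the arguments of $L_{xy}s^2$, $R_{xy}s$, $D_{xy}$, whereas the paper derives the equivalent bound $\Re\rho_{xy}/|\rho_{xy}| \ge \Re s/|s|$ by a direct termwise estimate on $1/\rho_{xy} = L_{xy}s + R_{xy} + D_{xy}/s$.
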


Secondly, we show that eigenvalues are additionally confined to the region described below:
\begin{theorem}
\label{thm:CircleRegions}
Let $\Delta_\rho$ be a normalized complex-weighted Laplacian. Then the following holds:
\begin{itemize}
\item
All its eigenvalues with positive real part lie in the circle with center at $(1,|\Im s|/\Re s)$ and radius $\sqrt {1+(\Im s/\Re s)^2}$.
\item
All its eigenvalues with negative real part lie in the circle with the center at $(1,-|\Im s|/\Re s)$ and radius $\sqrt {1+(\Im s/\Re s)^2}$.
\item
All its real eigenvalues lie in $[0,2]$.
\end{itemize}
\end{theorem}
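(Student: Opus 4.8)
The plan is to leverage Theorem~\ref{thm:EigenvaluesAroundOne}, which already confines every eigenvalue $\lambda$ to the disc $|1-\lambda| \leq |s|/\Re s$ centered at $1$. Writing $s = \Re s + i \Im s$, note that $|s|/\Re s = \sqrt{1 + (\Im s/\Re s)^2}$, so the two circles named in the theorem have exactly this same radius, but their centers are shifted vertically to $(1, \pm|\Im s|/\Re s)$. Thus the claim is that each eigenvalue lies not merely in the large disc around $1$, but in the union of two smaller-looking discs (same radius, shifted centers) selected according to the sign of $\Re \lambda$. The natural strategy is therefore to go back to the eigenvalue equation $\widetilde\Delta_\rho f = \lambda f$ and extract a sharper inequality than the one used for Theorem~\ref{thm:EigenvaluesAroundOne}, one that couples $\Re\lambda$ with $\Im\lambda$.

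First I would fix an eigenpair $(\lambda, f)$ and pick a vertex $x_0$ where $|f|$ is maximized, so that $|f(y)| \leq |f(x_0)|$ for all $y$. The eigenvalue equation gives
\begin{equation*}
(1 - \lambda) f(x_0) = \frac{1}{\rho(x_0)} \sum_{y} f(y)\, \rho_{x_0 y}.
\end{equation*}
Hence $1-\lambda = \frac{1}{\rho(x_0) f(x_0)} \sum_y f(y)\rho_{x_0 y}$, which is a weighted average of the quantities $f(y)/f(x_0)$ against the (complex) weights $\rho_{x_0 y}/\rho(x_0)$, the latter summing to $1$. I would then study the geometry of this complex convex-type combination. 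Since each factor $f(y)/f(x_0)$ lies in the closed unit disc, $1-\lambda$ lies in the convex hull of the set $\{ w \cdot (\rho_{x_0 y}/\rho(x_0)) : |w|\leq 1\}$, which is governed by the weights $\rho_{x_0 y}/\rho(x_0)$. The key is to control the argument/imaginary part of these normalized admittances: by Proposition~\ref{prop:Rerho} each $\rho_{xy}$ has positive real part, and I expect the admittance form \eqref{eq:admittance} to force a uniform bound on $|\arg \rho_{xy}|$ or on $\Im\rho_{xy}/\Re\rho_{xy}$ in terms of $\Im s/\Re s$.

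The decisive computation is to show that every admittance $\rho^{(s)}(x,y)$ satisfies $|\Im \rho_{xy}| \leq (|\Im s|/\Re s)\,\Re\rho_{xy}$, i.e. that the argument of each individual admittance is bounded by $\arg(s)$ in absolute value; equivalently, each $\rho_{xy}$ lies in the sector $\{\, z : |\arg z| \leq |\arg s| \,\}$. Writing $\rho_{xy} = s/(L s^2 + R s + D)$, this reduces to checking that the denominator $L s^2 + R s + D$ has argument lying in $[0, 2\arg s]$ (for $\Im s \geq 0$), which follows because $L s^2$ has argument $2\arg s$, $Rs$ has argument $\arg s$, and $D$ has argument $0$, so the nonnegative combination lands in the sector spanned by $0$ and $2\arg s$; subtracting from $\arg s$ then keeps $\arg\rho_{xy}$ within $[-\arg s, \arg s]$. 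Because the sector is convex and the normalized weights $\rho_{x_0 y}/\rho(x_0)$ are a convex combination of sector elements, $\rho(x_0)$ itself lies in the sector, and dividing one sector element by another shows $\rho_{x_0 y}/\rho(x_0)$ stays within a doubled sector of half-angle $2\arg s$. Translating the sector condition into the disc picture — using that a point $z$ with $|\arg z|\leq\theta$ and $|z|\leq r$ lies in a disc tangent to the boundary rays — should yield precisely the two shifted circles, with the sign of $\Re\lambda$ selecting which sector/circle applies.

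I expect the main obstacle to be the sign-selection and the passage from the sector bound to the two specific circles. The statement splits eigenvalues by the sign of $\Re\lambda$ into two different discs, so the crux is arguing that an eigenvalue cannot have, say, positive real part while its imaginary part is positioned as if it came from the opposite sector. This likely requires examining the two possible signs of $\Im s$ (the center coordinate uses $|\Im s|$, so both signs collapse together) and verifying that the convex hull of admissible $1-\lambda$ values, intersected with each half-plane $\{\Re\lambda > 0\}$ or $\{\Re\lambda < 0\}$, is contained in the corresponding circle. The real-eigenvalue claim $\lambda \in [0,2]$ should then fall out as the degenerate intersection of the two circles with the real axis, where both bounds force $0 \leq \lambda \leq 2$, recovering the classical interval.
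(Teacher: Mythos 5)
The decisive step of your plan does not work. At a vertex $x_0$ maximizing $|f|$ you write $1-\lambda = \sum_y c_y\, \bigl(f(y)/f(x_0)\bigr)$ with $c_y = \rho_{x_0y}/\rho(x_0)$, and you hope that the sector condition $|\arg\rho_{xy}|\le|\arg s|$ on the weights confines $1-\lambda$ to the union of the two shifted circles. But the only information available on the factors $f(y)/f(x_0)$ is that they lie in the closed unit disc, and the Minkowski sum of the sets $\{c_y w: |w|\le 1\}$ is again a disc centered at the origin, of radius $\sum_y|c_y|$ --- it retains no memory of the arguments of the $c_y$. So this argument can only reproduce the single disc $|1-\lambda|\le |s|/\Re s$ of Theorem \ref{thm:EigenvaluesAroundOne}; it cannot couple $\Re\lambda$ with $\Im\lambda$, which is exactly what the two-circle statement requires (each shifted circle passes through $0$ and $2$, and their union excludes parts of the big disc near the real axis outside $[0,2]$). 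You yourself flag the ``sign-selection'' step as the crux and leave it unresolved; it is not a technicality but the entire content of the refinement.

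The paper obtains the needed coupling from the quadratic form rather than from the pointwise eigenvalue equation: by Green's formula, $\lambda\sum_x|f(x)|^2\rho(x)=\frac12\sum_{x,y}|\nabla_{xy}f|^2\rho_{xy}$. Writing $\lambda=u+iw$ and $\rho_{xy}=\tau_{xy}+i\sigma_{xy}$, separating real and imaginary parts, and forming the combination $u\cdot(\text{real part})+w\cdot(\text{imaginary part})$ yields
\begin{equation*}
(u^2+w^2)\sum_{x}|f(x)|^2\tau(x)=\tfrac12\sum_{x,y}|\nabla_{xy}f|^2\bigl(\tau_{xy}u+\sigma_{xy}w\bigr),
\end{equation*}
after which $|\nabla_{xy}f|^2\le 2(|f(x)|^2+|f(y)|^2)$ and $\tau_{xy}>0$ give $u^2+w^2\le 2\bigl(u+w\max_{x\sim y}\sigma_{xy}/\tau_{xy}\bigr)$ when $w>0$ (with $\max$ replaced by $\min$ when $w<0$, and $u^2\le 2u$ when $w=0$). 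Completing the square and using $|\sigma_{xy}/\tau_{xy}|\le|\Im s|/\Re s$ --- the one ingredient your proposal does establish correctly, via the sector bound on the admittances --- produces the circles. Note also that the case distinction in the actual proof is governed by the sign of $\Im\lambda$, not of $\Re\lambda$; in any event, to rescue your approach you would need some identity of Rayleigh-quotient type, since the vertexwise maximum-principle bound is provably too weak here.
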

We shall see that for the linear graph $P_4$ a particular choice of weights exhausts the radii of the circles from the above theorem.

In Section \ref{section:SmallestEigenvalue} we prove the following bound on the smallest eigenvalue in modulus:
\begin{theorem}
\label{thm:Diameter}
Let $\lambda_1$ denote the smallest eigenvalue in modulus for the complex Laplacian $\widetilde \Delta_\rho$, and let $D$ denote the diameter of the underlying graph. Let
\begin{align*}
C_1 = \min_x \sum_y \big( \frac{1}{L_{xy} + R_{xy} + D_{xy}} \big), \quad C_2 = \sum_{x,y} \frac{1}{L_{xy} + R_{xy} + D_{xy}}
\end{align*}
and suppose that 
\begin{equation}
\label{eq:LowerBoundRealpart}
C_1 \Re s \min(|s|^2,|s|^{-4}) - C_2 \frac{|\Im s|}{\Re s} \frac{\max(1,|s|^2)}{\Re s} > 0.
\end{equation}
Then, we find the following estimate to hold:
\begin{equation*}
|\lambda_1| \geq \frac{C_1 (\Re s)^2}{D \cdot C_2 \cdot \min(1,|s|^2)}.
\end{equation*}
\end{theorem}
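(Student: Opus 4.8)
The plan is to run a Rayleigh-quotient argument adapted to the non-self-adjoint complex weights and to reduce everything to the real part of the Dirichlet form. First I would record the basic identity for an eigenfunction $f$ with eigenvalue $\lambda$: the equation $\widetilde\Delta_\rho f=\lambda f$ is equivalent to $\sum_y\rho_{xy}(f(x)-f(y))=\lambda\rho(x)f(x)$, and pairing this with $\overline{f(x)}$, summing over $x$, and using the symmetry $\rho_{xy}=\rho_{yx}$ gives
\begin{equation*}
\lambda\sum_{x}\rho(x)|f(x)|^2=\tfrac12\sum_{x,y}\rho_{xy}|f(x)-f(y)|^2 .
\end{equation*}
Summing the eigenvalue equation over $x$ also shows that for $\lambda\neq0$ the eigenfunction is orthogonal to constants in the bilinear pairing, i.e. $\sum_x\rho(x)f(x)=0$. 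Taking moduli in the identity and using $|z|\ge\Re z$ on the numerator together with $\Re\rho_{xy}=\tau_{xy}\ge0$ (Proposition \ref{prop:Rerho}), I would pass to the purely real quantity $\tfrac12\sum_{x,y}\tau_{xy}|f(x)-f(y)|^2$, reducing the problem to estimating a Rayleigh quotient of the real normalized Laplacian with weights $\tau_{xy}$.

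The constants $C_1,C_2$ and the $s$-dependent factors enter through three pointwise estimates on the admittances \eqref{eq:admittance}, each proved by writing $\rho_{xy}=s\overline{P}/|P|^2$ with $P=L_{xy}s^2+R_{xy}s+D_{xy}$ and computing $\Re(s\overline P)=\Re s\,(L_{xy}|s|^2+D_{xy})+R_{xy}|s|^2$ and $\Im(s\overline P)=\Im s\,(D_{xy}-L_{xy}|s|^2)$. These yield the lower bound $\tau_{xy}\ge\Re s\,\min(|s|^2,|s|^{-4})\,(L_{xy}+R_{xy}+D_{xy})^{-1}$, the upper bound $|\rho_{xy}|\le(\Re s)^{-1}\max(1,|s|^2)(L_{xy}+R_{xy}+D_{xy})^{-1}$, and the cone bound $|\sigma_{xy}|\le(|\Im s|/\Re s)\,\tau_{xy}$. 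Summing over the neighbours and over all pairs turns the minimal degree, the total weight $\sum_{x,y}\tau_{xy}$, and $\sum_{x,y}|\sigma_{xy}|$ into the quantities built from $C_1$ and $C_2$ in the statement; moreover the cone bound upgrades to $|\rho(x)|\le(|s|/\Re s)\tau(x)$, which lets me replace the modulus of the denominator $|\sum_x\rho(x)|f(x)|^2|$ by a multiple of $\sum_x\tau(x)|f(x)|^2$.

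It remains to bound the resulting real quotient from below, and the diameter $D$ enters through a discrete Poincaré inequality. Fixing a reference vertex $z$ and, for each $x$, a geodesic of length at most $D$ from $x$ to $z$, I would expand $f(x)-f(z)$ as a telescoping sum along the geodesic and apply Cauchy-Schwarz; summing against $\tau(x)$, interchanging the order of summation, and bounding the $\tau$-weight of the vertices whose geodesic runs through a given edge by the total weight $\sum_{x,y}\tau_{xy}$ yields an inequality of the form $\sum_x\tau(x)|f(x)-f(z)|^2\le D\cdot(\text{const depending on the }\tau_{xy})\cdot\tfrac12\sum_{x,y}\tau_{xy}|f(x)-f(y)|^2$. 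Feeding the pointwise bounds of the previous paragraph into this inequality converts the weights into $C_1$ and $C_2$ and produces the powers of $\Re s$ and the factor $\min(1,|s|^2)^{-1}$ in the claimed estimate.

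The main obstacle is that the eigenfunction is orthogonal to constants only in the complex bilinear pairing, $\sum_x\rho(x)f(x)=0$, whereas the Poincaré inequality requires the real $\tau$-weighted mean $\sum_x\tau(x)f(x)$ to vanish. The two quantities differ exactly by $i\sum_x\sigma(x)f(x)$, so the real mean is not zero but is controlled by $\sum_{x,y}|\sigma_{xy}|$ through the cone bound. Quantifying this discrepancy and subtracting it from the gain of the Poincaré inequality is precisely where hypothesis \eqref{eq:LowerBoundRealpart} is used: the first summand $C_1\,\Re s\,\min(|s|^2,|s|^{-4})$ is the genuine real contribution, bounding the relevant $\tau$-weight from below, while the second summand $C_2\,(|\Im s|/\Re s)(\max(1,|s|^2)/\Re s)$ is the upper bound for the imaginary correction coming from $\sum_{x,y}|\sigma_{xy}|$. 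The assumed positivity of their difference guarantees that the net constant stays positive, and tracking it through the quotient gives the stated lower bound on $|\lambda_1|$.
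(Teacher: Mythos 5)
Your setup coincides with the paper's: the identity from Green's formula \eqref{energy}, the relation $\sum_x\rho(x)f(x)=0$ obtained by pairing with the constant function, the pointwise bounds $\Re\rho_{xy}\ge \Re s\,\min(|s|^2,|s|^{-4})\,(L_{xy}+R_{xy}+D_{xy})^{-1}$ and $|\sigma_{xy}|\le(|\Im s|/\Re s)\,\tau_{xy}$ are exactly the content of Lemma \ref{lem:LowerBoundAdmittance} and Proposition \ref{prop:Rerho}, and the denominator is disposed of in the same way via \eqref{eq:EstimateModulusRho}. Where you genuinely diverge is the lower bound on the Dirichlet energy. The paper does not run a global Poincar\'e inequality: it normalizes $f(x_0)=1=\max|f|$, uses $\sum_x\rho(x)f(x)=0$ together with hypothesis \eqref{eq:LowerBoundRealpart} to produce a \emph{single} vertex $x_{n+1}$ with $\Re f(x_{n+1})<0$, and then telescopes $\Re f$ along one geodesic from $x_0$ to $x_{n+1}$. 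Since the total drop of $\Re f$ along that path exceeds $1$, Cauchy--Schwarz gives $\tfrac12\sum_{x,y}\tau_{xy}|\nabla_{xy}f|^2\ge \min_{x\sim y}\tau_{xy}/D$ directly. In other words, the hypothesis is consumed entirely in forcing a sign change of $\Re f$, not in absorbing a mean defect into a Poincar\'e constant.

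As written, your argument has a concrete gap at precisely this point: you establish only the upper-bound half of the Poincar\'e inequality, $\sum_x\tau(x)|f(x)-f(z)|^2\le D\cdot C(\tau)\cdot\tfrac12\sum_{x,y}\tau_{xy}|\nabla_{xy}f|^2$, but you never supply a \emph{lower} bound for its left-hand side (a lower bound for the $\tau$-variance of $f$), and without one no lower bound on the energy, hence none on $|\lambda_1|$, follows. Such a bound must come from the normalization $\max|f|=1$ combined with the near-orthogonality to constants; for instance, writing $0=f(x^*)\sum_x\rho(x)+\sum_x\rho(x)(f(x)-f(x^*))$ at a maximizing vertex $x^*$ and using \eqref{eq:EstimateModulusAdmittance} yields $\sum_x\tau(x)|f(x)-f(x^*)|^2\ge (\Re s/|s|)^2\sum_x\tau(x)$. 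But carrying this (or your mean-defect correction, which costs a further factor of the form $1-|\Im s|/\Re s$) through your chain produces a constant with extra factors such as $(\Re s/|s|)^2$ that do not appear in the theorem. So the route is salvageable and yields a bound of the same qualitative shape $|\lambda_1|\gtrsim 1/D$, but to reach the stated constant you should replace the global Poincar\'e step by the paper's single-path argument built on the vertex with $\Re f<0$.
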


\emph{Outline of the paper.} In Section \ref{section:Preliminaries} we recall basic identities, which are needful for the proofs in later sections. Furthermore, we record symmetries of the problem. In Section \ref{section:EigenvalueRegions} we prove Theorems \ref{thm:EigenvaluesAroundOne} and \ref{thm:CircleRegions}. The eigenvalues for the linear graph $P_4$ are computed for a particular choice of weight. In Section \ref{section:SmallestEigenvalue} we prove Theorem \ref{thm:Diameter}.

\section{Preliminaries}
\label{section:Preliminaries}
We start with the following repetition concerning the real Laplacian:
\begin{proposition}
Let $(V,E)$ be a finite connected graph with real weights and $\widetilde \Delta_\rho$ be the real Laplacian. Then, we find the following to hold:
\begin{itemize}
\item[1.] The eigenvalues lie in the interval $[0,2]$ of $\widetilde \Delta_\rho$, where $0$ is a simple eigenvalue.
\item[2.] $2$ is an eigenvalue if and only if $V$ is \emph{bipartite}, i.e., there is a partition of vertices $V = V_+ \cup V_-$, $V_+ \cap V_- = \emptyset$ such that for any $x \in V_+$ $x \sim y$ implies that $y \in V_-$.
\item[3.] $V$ is bipartite if and only if for any eigenvalue $\lambda$ we find $2-\lambda$ to be another eigenvalue.
\end{itemize}
\end{proposition}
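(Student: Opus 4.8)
The plan is to exploit that, for real symmetric weights $\rho_{xy}=\rho_{yx}\geq 0$ (which is exactly the situation $L_{xy}=D_{xy}=0$, so that $\rho_{xy}=1/R_{xy}>0$), the operator $\widetilde\Delta_\rho = I - P$, where $Pf(x)=\rho(x)^{-1}\sum_y \rho_{xy}f(y)$, is self-adjoint and non-negative with respect to the weighted inner product $\langle f,g\rangle_\rho = \sum_{x\in V}\rho(x)f(x)\overline{g(x)}$. The whole proposition rests on the two Dirichlet-form identities
\begin{align*}
\langle \widetilde\Delta_\rho f, f\rangle_\rho &= \tfrac12\sum_{x,y}\rho_{xy}|f(x)-f(y)|^2, \\
\langle (2I - \widetilde\Delta_\rho)f, f\rangle_\rho &= \tfrac12\sum_{x,y}\rho_{xy}|f(x)+f(y)|^2,
\end{align*}
both obtained by expanding and symmetrizing in $x,y$ using $\rho(x)=\sum_y\rho_{xy}$ and $\rho_{xy}=\rho_{yx}$.

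For Part 1, I would read off from the first identity that $\widetilde\Delta_\rho\geq 0$ and from the second that $\widetilde\Delta_\rho\leq 2I$, so every eigenvalue lies in $[0,2]$. Simplicity of $0$ follows from the rigidity of the first form: $\langle\widetilde\Delta_\rho f,f\rangle_\rho=0$ forces $f(x)=f(y)$ along every edge, and connectedness then forces $f$ constant; since constants are precisely the $0$-eigenfunctions, that eigenspace is one-dimensional.

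For Part 2, I would use that $2$ is the top of the spectrum, so $2$ is an eigenvalue if and only if the non-negative self-adjoint operator $2I-\widetilde\Delta_\rho$ has nontrivial kernel, which by the second identity happens exactly when there is $f\neq 0$ with $f(x)=-f(y)$ across every edge. If $V$ is bipartite, the function $f=\pm 1$ on $V_\pm$ works; conversely, such an $f$ (which may be taken real, eigenfunctions of a real self-adjoint operator for a real eigenvalue being real) has $|f|$ constant and positive on the connected graph, hence $f\in\{+c,-c\}$ with the sign flipping across every edge, and the level sets $\{f=+c\}$, $\{f=-c\}$ furnish the bipartition.

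For Part 3, the forward direction is the one requiring an idea: assuming $V$ bipartite, let $g=\pm 1$ on $V_\pm$ and check that $f\mapsto g\cdot f$ conjugates $P$ to $-P$ (every edge crosses between the two classes, so the sign in $Pf$ is reversed), whence it sends the $\lambda$-eigenspace of $\widetilde\Delta_\rho$ bijectively onto its $(2-\lambda)$-eigenspace. The converse is immediate from Part 2: symmetry of the spectrum about $1$, together with $0$ always being an eigenvalue (the constants), forces $2=2-0$ to be an eigenvalue, whence $V$ is bipartite. I expect the main obstacle to be formulating the conjugation $f\mapsto gf$ cleanly and justifying the passage from ``the quadratic form vanishes'' to ``$f$ lies in the kernel'', i.e. invoking non-negativity and self-adjointness of $2I-\widetilde\Delta_\rho$ rather than merely the form inequality.
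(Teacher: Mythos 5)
Your proposal is correct, and it is essentially the argument the paper itself relies on: the paper states this proposition without proof as a recollection of standard facts (citing Chung, Grigor'yan, Bauer--Jost), but the two tools you use --- the energy identity $\langle\widetilde\Delta_\rho f,f\rangle_\rho=\tfrac12\sum_{x,y}\rho_{xy}|\nabla_{xy}f|^2$ for Part 1 and the simplicity of $0$, and the sign-flip function $g=\pm1$ on $V_\pm$ for Part 3 --- are exactly the ones the paper deploys later for the complex generalizations (the simplicity of $0$ for the complex Laplacian and Proposition \ref{prop:ComplexEigenvaluesBipartiteGraphs}). Your identity $\langle(2I-\widetilde\Delta_\rho)f,f\rangle_\rho=\tfrac12\sum_{x,y}\rho_{xy}|f(x)+f(y)|^2$ and the rigidity argument for Part 2 are the standard completion and check out.
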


Recall the following Green's formula (\cite{Muranova1, MuranovaThesis}):
\begin{equation}\label{Green}
\sum_{x\in V}\widetilde \Delta_\rho f(x)g(x)\rho(x)=\dfrac{1}{2}\sum_{x,y\in V}(\nabla_{xy}f)(\nabla_{xy}g)\rho_{xy},
\end{equation}
where we denote
\begin{equation*}
\nabla_{xy}f=f(y)-f(x).
\end{equation*}
Applying Green's formula to $g=\overline f$ (complex conjugate), we get
\begin{equation}\label{energy}
\sum_{x\in V}\widetilde \Delta_\rho f(x)\overline{f(x)}\rho(x)=\dfrac{1}{2}\sum_{x,y\in V}|\nabla_{xy}f|^2\rho_{xy}.
\end{equation}
Note, that the right hand side of the last equation corresponds to the \emph{complex power} (cf. \cite{AlonsoRuiz, Baez, ChenTeplyaev, MuranovaThesis}).
For estimates on the eigenvalues we will need estimates for $\rho_{xy}$. These are collected in the following proposition:
\begin{proposition}\label{prop:Rerho}
Let $\rho_{xy}$ be as in \eqref{eq:admittance} with the notations from above. Then, the following holds:
\begin{itemize}
\item[1.] If $x$ and $y$ are related, then $\Re \rho_{xy} > 0$. 
\item[2.] 
\begin{equation}
\label{eq:EstimateModulusAdmittance}
| \rho_{xy} | \leq \frac{|s|}{\Re s} \Re \rho_{xy}.
\end{equation}
\item[3.]
\begin{equation}
\label{eq:EstimateModulusRho}
| \rho(x)| \leq \sum_y \frac{1}{L_{xy} + D_{xy} + R_{xy}} \frac{\max(1,|s|^2)}{\Re s} .
\end{equation}
\end{itemize}
\end{proposition}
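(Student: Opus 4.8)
The plan is to route all three estimates through the reciprocal admittance
\[
\frac{1}{\rho_{xy}} = L_{xy}\, s + R_{xy} + \frac{D_{xy}}{s},
\]
whose real part is transparent, together with the elementary identity $\Re(1/z) = \Re(z)/|z|^2$ (valid for $z \neq 0$), which shows that $\Re z$ and $\Re(1/z)$ have the same sign and satisfy $\Re z = |z|^2\,\Re(1/z)$. Writing $a = \Re s > 0$, it gives in particular $\Re(1/s) = a/|s|^2$, so that
\[
\Re\Big(\frac{1}{\rho_{xy}}\Big) = L_{xy}\, a + R_{xy} + D_{xy}\frac{a}{|s|^2}.
\]

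For part (1) I observe that each summand on the right above is non-negative, since $a>0$, $|s|>0$, and the weights are non-negative; as $L_{xy}+R_{xy}+D_{xy}>0$ forces at least one summand to be strictly positive, we get $\Re(1/\rho_{xy})>0$, and hence $\Re\rho_{xy}>0$ by the sign identity. For part (2), using $\Re\rho_{xy} = |\rho_{xy}|^2\,\Re(1/\rho_{xy})$ and dividing by $|\rho_{xy}|>0$, the desired inequality \eqref{eq:EstimateModulusAdmittance} becomes equivalent to $|1/\rho_{xy}| \le \tfrac{|s|}{a}\,\Re(1/\rho_{xy})$. Here the triangle inequality yields $|1/\rho_{xy}| \le L_{xy}|s| + R_{xy} + D_{xy}/|s|$, while $\tfrac{|s|}{a}\Re(1/\rho_{xy}) = L_{xy}|s| + R_{xy}\tfrac{|s|}{a} + D_{xy}/|s|$; the two expressions agree except in the middle term, where the inequality reduces to the always-valid $a \le |s|$.

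For part (3) I bound each term by $|\rho_{xy}| \le 1/\Re(1/\rho_{xy})$, using $|w|\ge \Re w$, and then sum over $y$ via $|\rho(x)| \le \sum_y |\rho_{xy}|$. It then remains to check, with $M = \max(1,|s|^2)$, that
\[
\frac{1}{L_{xy}a + R_{xy} + D_{xy}a/|s|^2} \le \frac{M}{a\,(L_{xy}+R_{xy}+D_{xy})},
\]
i.e.\ that $a(L_{xy}+R_{xy}+D_{xy}) \le M\big(L_{xy}a + R_{xy} + D_{xy}a/|s|^2\big)$. This follows term by term from $M\ge 1$ (for the $L_{xy}$-term), from $a \le M$ (for the $R_{xy}$-term), and from $|s|^2 \le M$ (for the $D_{xy}$-term). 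The main obstacle is precisely this bookkeeping: the single prefactor $\max(1,|s|^2)$ must dominate all three contributions simultaneously, which hinges on the case distinction behind $a \le M$ (whether $|s|\ge 1$ or $|s|<1$, using $a \le |s| \le M$). The first two parts are comparatively direct once the reciprocal formulation and the real-part identity are in hand.
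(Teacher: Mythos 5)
Your proof is correct and follows essentially the same route as the paper's: all three parts are reduced to the reciprocal $1/\rho_{xy} = L_{xy}s + R_{xy} + D_{xy}/s$ via $\Re z = |z|^2 \Re(1/z)$, with the same triangle-inequality and $\Re s \le |s| \le \max(1,|s|^2)$ comparisons. Your term-by-term bookkeeping in part (3) and the explicit remark that $L_{xy}+R_{xy}+D_{xy}>0$ forces strict positivity in part (1) are minor presentational differences, not a different argument.
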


\begin{proof}
\begin{itemize}
\item[1.]
Note that for any complex number $z\in \Bbb C$, $\Re z>0$ if and only if $\Re \frac{1}{z}>0$. We have
\begin{equation*}
\Re \frac{1}{\rho_{xy}}=L_{xy}\Re s+R_{xy}+D_{xy}\Re\frac{1}{s}>0, \mbox{ whenever }\Re s>0.
\end{equation*}
Therefore,  $\Re \rho_{xy} > 0$. 
\item[2.] 
\begin{align*}
\dfrac{\Re\rho_{xy}}{|\rho_{xy}|}=&|\rho_{xy}|\Re \frac{1}{\rho_{xy}}=\dfrac {1}{\left|L_{xy} s+R_{xy}+\frac{D_{xy}}{s}\right|}\left(L_{xy}\Re s+R_{xy}+D_{xy}\Re\frac{1}{s}\right)\\
=&\dfrac {1}{\left|L_{xy} s+R_{xy}+\frac{D_{xy}}{s}\right|}\left(L_{xy}\Re s+R_{xy}+D_{xy}\frac{\Re s}{|s|^2}\right)\\
\geq&\dfrac {1}{L_{xy} |s|+R_{xy}+\frac{D_{xy}}{|s|}}\left(L_{xy}\Re s+R_{xy}+D_{xy}\frac{\Re s}{|s|^2}\right)
\\
\geq&\dfrac {1}{L_{xy} |s|+R_{xy}+\frac{D_{xy}}{|s|}}\left(L_{xy}\Re s+R_{xy}\frac{\Re s}{|s|}+D_{xy}\frac{\Re s}{|s|^2}\right)=\dfrac{\Re s}{|s|},
\end{align*}
from which \eqref{eq:EstimateModulusAdmittance} follows.
\item[3.]
\begin{align*}
| \rho(x)| =&\left|\sum_y \rho_{xy}\right|\leq \sum_y |\rho_{xy}|\leq \sum_y \dfrac {1}{\left|L_{xy} s+R_{xy}+\frac{D_{xy}}{s}\right|}\leq \sum_y \dfrac {1}{\Re \left(L_{xy} s+R_{xy}+\frac{D_{xy}}{s}\right)}\\
=& \sum_y \dfrac {1}{ \left(L_{xy}\Re s+R_{xy}+D_{xy}\frac{\Re s}{|s|^2}\right)}=  \sum_y \dfrac {1}{ \Re s\left(L_{xy}+R_{xy}\frac{1}{\Re s}+D_{xy}\frac{1}{|s|^2}\right)}\\
\leq& \sum_y \dfrac {1}{ \Re s\left(L_{xy}+R_{xy}+D_{xy}\right)\min \left(\frac{1}{\Re s}, \frac{1}{|s|^2},1\right)}
= \sum_y \dfrac {\max \left({\Re s}, {|s|^2},1\right)}{ \Re s\left(L_{xy}+R_{xy}+D_{xy}\right)}\\
=& \sum_y \dfrac {\max \left({|s|^2},1\right)}{ \Re s\left(L_{xy}+R_{xy}+D_{xy}\right)},
\end{align*}
where in the last line we have used the fact that either $\Re s\le 1$ or $1<\Re s<(\Re s)^2\le |s|^2$.
\end{itemize}
\end{proof}

\begin{proposition}
$0$ is a simple eigenvalue of the complex Laplacian.
\end{proposition}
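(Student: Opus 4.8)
The plan is to show that the kernel of $\widetilde\Delta_\rho$ consists precisely of the constant functions, which is a one-dimensional space; together with the observation that $0$ is actually attained as an eigenvalue, this yields simplicity. First I would verify that constants lie in the kernel: if $f \equiv c$, then
\[
\frac{1}{\rho(x)}\sum_{y\in V} f(y)\rho_{xy} = \frac{c}{\rho(x)}\sum_{y\in V}\rho_{xy} = c,
\]
so $\widetilde\Delta_\rho f(x) = c - c = 0$ for every $x$. Hence $0$ is an eigenvalue, and it remains only to bound the dimension of the corresponding eigenspace.

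The heart of the argument is the energy identity \eqref{energy}. Suppose $\widetilde\Delta_\rho f = 0$. Then every summand on the left-hand side of \eqref{energy} vanishes, so
\[
\frac{1}{2}\sum_{x,y\in V}|\nabla_{xy}f|^2 \rho_{xy} = 0.
\]
Because the weights $\rho_{xy}$ are complex, the individual summands are not necessarily non-negative, so I cannot immediately conclude that each gradient vanishes. The decisive step is to pass to real parts: taking the real part of the identity above gives
\[
\frac{1}{2}\sum_{x,y\in V}|\nabla_{xy}f|^2 \,\Re\rho_{xy} = 0.
\]
By Proposition \ref{prop:Rerho}, part 1, one has $\Re\rho_{xy} > 0$ whenever $x \sim y$, while $|\nabla_{xy}f|^2 \geq 0$ always. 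Thus every term in the sum is non-negative, and a sum of non-negative terms can vanish only if each term vanishes. This forces $\nabla_{xy}f = 0$, i.e. $f(x) = f(y)$, on every edge.

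Finally I would invoke connectivity of $(V,E)$: since $f$ is constant along every edge, it is constant on the whole graph. Therefore the kernel equals the one-dimensional space of constant functions, and $0$ is a simple eigenvalue. The main obstacle is precisely the loss of self-adjointness and positive-definiteness caused by the complex weights, which prevents a direct appeal to the real-case argument. The observation that resolves this is that taking real parts converts the complex energy form into a genuine non-negative quadratic form with strictly positive coefficients on the edges, by Proposition \ref{prop:Rerho}; this is what allows the standard reasoning to go through essentially unchanged.
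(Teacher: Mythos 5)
Your proof is correct and follows essentially the same route as the paper: exhibit the constants as the kernel via the energy identity \eqref{energy}, take real parts, use $\Re\rho_{xy}>0$ from Proposition \ref{prop:Rerho} to force $\nabla_{xy}f=0$ on edges, and conclude by connectedness. Your write-up is somewhat more explicit about why the passage to real parts is the decisive step, but the argument is the same.
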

\begin{proof}
It is clear that $f \equiv 1$ is an eigenfunction with eigenvalue $0$. On the other hand, for an eigenfunction $f$ with eigenvalue $0$, \eqref{energy} yields
\begin{equation*}
0 = \frac{1}{2} \sum_{x,y \in V} |\nabla_{xy} f|^2 \rho_{xy}.
\end{equation*}

Since $\Re \rho_{xy} >0$ provided that $x$ and $y$ are related, taking the real part of the above display we find that $|\nabla_{xy} f| = 0$ whenever $x$ and $y$ are related. Since the graph is connected, $f \equiv c$. The proof is complete.
\end{proof}
\begin{proposition}
The sum of the all eigenvalue of Laplacian, counted with algebraic multiplicities, is equal to the number of vertices in the graph, i.e
\begin{equation}\label{sumofev}
\lambda_0+\lambda_1+\dots+\lambda_{n-1}=n,
\end{equation}
where $n=|V|$.

Moreover, 
\begin{equation}\label{estimate::reOfeigenvalue}
\max_i \Re \lambda_{i}\ge \dfrac{n}{n-1}>1,
\end{equation}
and
\begin{equation*}
\min_i \Re \lambda_{i}\le \dfrac{n}{n-1}.
\end{equation*}
\end{proposition}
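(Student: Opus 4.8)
The plan is to read the whole proposition off the trace of the matrix representing $\widetilde\Delta_\rho$. Writing the operator in the standard basis indexed by the vertices, one has $\widetilde\Delta_\rho = I - P$, where $P$ is the matrix with entries $P_{xy} = \rho_{xy}/\rho(x)$; indeed, the coefficient of $f(x)$ in $\widetilde\Delta_\rho f(x)$ is $1 - \rho_{xx}/\rho(x)$, while the coefficient of $f(y)$ for $y \neq x$ is $-\rho_{xy}/\rho(x)$. The decisive observation is that $P$ has vanishing diagonal: since loops are excluded, $xx$ is not an edge and hence $\rho_{xx} = 0$, giving $P_{xx} = 0$ for every $x \in V$. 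Consequently every diagonal entry of $\widetilde\Delta_\rho = I - P$ equals $1$, so that $\trace \widetilde\Delta_\rho = n$.

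Next I would invoke the elementary fact that, for any square matrix, the trace equals the sum of its eigenvalues counted with algebraic multiplicities (this holds regardless of diagonalizability, as it follows from the second coefficient of the characteristic polynomial). Combined with the trace computation this yields $\lambda_0 + \lambda_1 + \dots + \lambda_{n-1} = n$, which is precisely \eqref{sumofev}; note that this sum is the real number $n$ even though the individual $\lambda_i$ may be complex. Taking real parts therefore gives $\sum_i \Re\lambda_i = n$. For the lower bound on $\min_i \Re\lambda_i$ this already suffices: the average of the $n$ real parts is $n/n = 1$, and since the minimum is at most the average we get $\min_i \Re\lambda_i \leq 1 < n/(n-1)$.

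The only point requiring the extra input is the sharp bound \eqref{estimate::reOfeigenvalue} on the maximum. Here I would use the preceding proposition, that $0$ is a \emph{simple} eigenvalue, say $\lambda_0 = 0$. Subtracting it off leaves $\sum_{i \geq 1} \Re\lambda_i = n$ over exactly $n-1$ eigenvalues, whose real parts therefore average to $n/(n-1)$; hence $\max_{i \geq 1} \Re\lambda_i \geq n/(n-1)$, and since $n/(n-1) > 1 > 0 = \Re\lambda_0$ this maximum coincides with the maximum over all eigenvalues, yielding \eqref{estimate::reOfeigenvalue} together with the strict inequality $n/(n-1) > 1$. There is essentially no obstacle beyond correctly identifying the matrix $I - P$ and its vanishing diagonal; the single subtlety is that the simplicity of the eigenvalue $0$ is exactly what upgrades the trivial averaging bound $\max_i \Re\lambda_i \geq 1$ to the strict bound $n/(n-1) > 1$.
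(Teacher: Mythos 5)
Your proof is correct and follows essentially the same route as the paper: the trace of the Laplacian matrix equals $n$ because all diagonal entries are $1$ (no loops), and averaging the real parts of the $n-1$ nonzero eigenvalues gives both bounds. The only cosmetic difference is that the max bound needs merely that $0$ \emph{is} an eigenvalue (so the remaining $n-1$ real parts sum to $n$), not its simplicity, but this does not affect correctness.
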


\begin{proof}
The first fact follows from the consideration of the trace of the Laplacian matrix $A$. Indeed, since $A_{ii} = 1$, $i=1,\ldots,n$, we find
\begin{equation*}
\lambda_0+\lambda_1+\dots+\lambda_{n-1}=\trace A=n,
\end{equation*}
where $n=|V|$.

Further, since $\lambda_0=0$, we have

\begin{equation*}
n=\Re \lambda_1+\dots+\Re \lambda_{n-1}\le (n-1)\max_i \Re \lambda_{i}.
\end{equation*}
and 
\begin{equation*}
n=\Re \lambda_1+\dots+\Re \lambda_{n-1}\ge (n-1)\min_i \Re \lambda_{i}.
\end{equation*}
Therefore, 
\begin{equation*}
\max_i \Re \lambda_{i}\ge \dfrac{n}{n-1}>1
\end{equation*}
and
\begin{equation*}
\min_i \Re \lambda_{i}\le \dfrac{n}{n-1}.
\end{equation*}
\end{proof}

\begin{remark}
Note that if we assume that 
\begin{equation*}
0=|\lambda_0|<|\lambda_1|\le|\lambda_2|\le\dots\le|\lambda_{n-1}|,
\end{equation*}
then from \eqref{estimate::reOfeigenvalue} follows that
\begin{equation*}
|\lambda_{n-1}|\ge \dfrac{n}{n-1}.
\end{equation*}
Moreover, from \eqref{sumofev} follows that
\begin{equation*}
\Im \lambda_1+\Im\lambda_2+\dots+\Im \lambda_{n-1}=0,
\end{equation*}
and, therefore,
\begin{equation*}
\min_i\Im \lambda_i\le 0\le \max_i\Im \lambda_i.
\end{equation*}
\end{remark}
Next, we show that switching to the dual network conjugates eigenvalues and eigenvectors:
\begin{proposition}
If we consider the dual network, i.e., the network with the weights $\overline \rho_{xy}$, eigenvalues (eigenvectors) of its complex Laplacian will be conjugated to the corresponding eigenvalues (eigenvectors) of the Laplacian of the original network. 
\end{proposition}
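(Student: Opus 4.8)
The plan is to prove this by a direct conjugation of the eigenvalue equation, with no appeal to any structural spectral theory. The only facts I need are that complex conjugation is a field automorphism of $\C$ (so it commutes with finite sums, products and reciprocals of nonzero numbers) and that $\rho(x) \neq 0$ for every $x$, so that \eqref{Delta} is well-defined for both networks. The latter is immediate from Proposition \ref{prop:Rerho}.1, since $\Re \rho(x) = \sum_y \Re \rho_{xy} > 0$, and the analogous bound for the dual network holds because $\Re \overline\rho_{xy} = \Re \rho_{xy} > 0$.

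First I would record that the vertex degree of the dual network is the conjugate of the original one: setting $\overline\rho(x) := \sum_{y \in V} \overline\rho_{xy}$ and commuting conjugation past the finite sum gives $\overline\rho(x) = \overline{\rho(x)}$. Next, suppose $f$ is an eigenfunction of $\widetilde\Delta_\rho$ for the eigenvalue $\lambda$, so that pointwise
\begin{equation*}
f(x) - \frac{1}{\rho(x)} \sum_{y \in V} f(y)\, \rho_{xy} = \lambda\, f(x).
\end{equation*}
Applying complex conjugation to both sides and distributing it across the sum and the quotient yields
\begin{equation*}
\overline{f}(x) - \frac{1}{\overline\rho(x)} \sum_{y \in V} \overline{f}(y)\, \overline\rho_{xy} = \overline\lambda\, \overline{f}(x),
\end{equation*}
where the degree identity from the previous step was used in the denominator. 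By the definition \eqref{Delta} applied to the weights $\overline\rho_{xy}$, the left-hand side equals $\widetilde\Delta_{\overline\rho}\,\overline f(x)$, so $\overline f$ is an eigenfunction of the dual Laplacian for the eigenvalue $\overline\lambda$. Equivalently, letting $C$ denote the conjugate-linear map $f \mapsto \overline f$, the computation shows the intertwining relation $\widetilde\Delta_{\overline\rho} \circ C = C \circ \widetilde\Delta_\rho$.

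I expect no genuine obstacle here beyond the bookkeeping of the conjugation; the one point worth stating explicitly is the degree identity $\overline\rho(x) = \overline{\rho(x)}$, since it is what makes the conjugated equation land exactly on the dual Laplacian rather than on some other operator. If one wants the correspondence of the full spectra \emph{with multiplicities}, I would add that $C$ is a conjugate-linear bijection carrying any linearly independent family to a linearly independent family (conjugating a linear dependence relation produces another one, with conjugated coefficients), so $C$ maps a basis of the generalized eigenspace of $\lambda$ onto a basis of the generalized eigenspace of $\overline\lambda$; hence algebraic and geometric multiplicities are preserved and $\lambda \mapsto \overline\lambda$ is a multiplicity-preserving bijection between the two spectra.
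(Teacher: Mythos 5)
Your proposal is correct and follows essentially the same route as the paper: conjugate the eigenvalue equation pointwise and observe that the result is exactly the dual Laplacian applied to $\overline f$. The extra remarks on the degree identity $\overline\rho(x)=\overline{\rho(x)}$ and on preservation of multiplicities are sound elaborations of details the paper leaves implicit.
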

\begin{proof}
Let $\lambda f(x)=\widetilde \Delta_\rho f(x)$.
Then
\begin{align*}
\overline \lambda \overline {f(x)}=&\overline{\widetilde\Delta_\rho f(x)}=\overline{f(x)}-\dfrac{1}{\overline{\rho(x)}}\sum_{y}\overline{ f(y)}\overline\rho_{xy}=\widetilde\Delta_{\overline \rho} \overline{f(x)}.
\end{align*}
\end{proof}

The claims about eigenvalues for bipartite graphs in the real case generalize as follows:
\begin{proposition}
\label{prop:ComplexEigenvaluesBipartiteGraphs}
With the above notations, let $\widetilde \Delta_{\rho}$ be the complex Laplacian of an electrical network. Suppose that the underlying graph is bipartite. Then for any eigenvalue $\lambda$ we find $2-\lambda$ to be another eigenvalue.
\end{proposition}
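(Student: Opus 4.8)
The plan is to adapt the classical sign-flip argument from the real case (part 3 of the first Proposition), which is purely algebraic and therefore survives unchanged in the complex setting. First I would write the Laplacian as $\widetilde \Delta_\rho = I - P$, where $P$ is the averaging operator $Pf(x) = \rho(x)^{-1}\sum_{y} f(y)\rho_{xy}$. Since the graph is bipartite, fix a partition $V = V_+ \cup V_-$ and introduce the diagonal sign operator $S$ given by $(Sf)(x) = \varepsilon(x) f(x)$, where $\varepsilon(x) = +1$ for $x \in V_+$ and $\varepsilon(x) = -1$ for $x \in V_-$. Note that $S$ is an involution, $S^2 = I$, so $S$ is invertible with $S^{-1} = S$.

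The key step is the identity $S P S = -P$. Indeed, $(SPSf)(x) = \varepsilon(x)\rho(x)^{-1}\sum_{y}\varepsilon(y) f(y)\rho_{xy}$, and each summand is nonzero only when $xy$ is an edge; in that case bipartiteness forces $x$ and $y$ to lie in different parts, so $\varepsilon(x)\varepsilon(y) = -1$. Hence every surviving term acquires a factor $-1$, giving $(SPSf)(x) = -\rho(x)^{-1}\sum_y f(y)\rho_{xy} = -(Pf)(x)$. This is the single place where the hypothesis enters, and it is precisely where the complex nature of $\rho_{xy}$ is irrelevant: the cancellation is multiplicative and makes no use of positivity or reality of the weights. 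From $SPS=-P$ I then obtain $S\widetilde \Delta_\rho S = S(I-P)S = I - SPS = I + P = 2I - \widetilde \Delta_\rho$.

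To conclude, multiply $S\widetilde \Delta_\rho S = 2I - \widetilde \Delta_\rho$ on the right by $S$ to get $S\widetilde \Delta_\rho = (2I - \widetilde \Delta_\rho)S$. Thus if $\widetilde \Delta_\rho f = \lambda f$ with $f \neq 0$, applying $S$ yields $(2I - \widetilde \Delta_\rho)(Sf) = \lambda\, (Sf)$, i.e. $\widetilde \Delta_\rho(Sf) = (2-\lambda)(Sf)$, and since $Sf \neq 0$ this exhibits $2-\lambda$ as an eigenvalue with eigenfunction $g = Sf$. More conceptually, $S$ realizes a similarity between $\widetilde \Delta_\rho$ and $2I - \widetilde \Delta_\rho$, so the spectra coincide with matching algebraic and geometric multiplicities.

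I do not expect a genuine obstacle here; the only conceptual point worth flagging is that, unlike the real case, the complex Laplacian is \emph{not} self-adjoint, so the symmetry of the spectrum cannot be deduced from the spectral theorem or from any orthogonality structure. It must instead come from the explicit conjugation $S\widetilde \Delta_\rho S = 2I - \widetilde \Delta_\rho$ established above, which holds at the level of matrices regardless of self-adjointness.
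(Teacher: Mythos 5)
Your proof is correct and is essentially the paper's argument in operator form: the function $g = Sf$ is exactly the sign-flipped eigenfunction the paper constructs, and your identity $S\widetilde{\Delta}_\rho S = 2I - \widetilde{\Delta}_\rho$ is just the conceptual packaging of the paper's direct computation of $\widetilde{\Delta}_\rho g(x)$ for $x \in V_+$. The only (minor) added value is that the similarity transformation makes explicit that algebraic multiplicities are preserved, which the paper does not state.
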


\begin{proof}
If $V_+$ and $V_-$ are suitable partitions of the graph, and $(\lambda,f)$ is an eigenpair, then the function $g$, given by
\begin{equation*}
g(x)=\begin{cases}
f(x), x\in V_+,\\
-f(x), x\in V_-,
\end{cases}
\end{equation*}

is the eigenfuction, corresponding to the eigenvalue $2-\lambda$. Indeed, for $x\in V_+$,
\begin{align*}
{\widetilde\Delta_\rho g(x)}=&{g(x)}-\dfrac{1}{{\rho(x)}}\sum_{y}{ g(x)}\rho_{xy}={f(x)}+\dfrac{1}{{\rho(x)}}\sum_{y}{ f(x)}\rho_{xy}\\
=&2{f(x)}-\left(f(x)-\dfrac{1}{{\rho(x)}}\sum_{y}{ f(x)}\rho_{xy}\right)\\
=&2f(x)-{\widetilde\Delta_\rho f(x)}=(2-\lambda)f(x)=(2-\lambda)g(x).
\end{align*}
The case $x\in V_-$ can be treated analogously.
\end{proof}

\section{Eigenvalue regions}
\label{section:EigenvalueRegions}
\subsection{Proof of Theorems \ref{thm:EigenvaluesAroundOne} and \ref{thm:CircleRegions}}
Firstly, we show Theorem \ref{thm:EigenvaluesAroundOne}:
\begin{proof}[Proof of Theorem \ref{thm:EigenvaluesAroundOne}]
We choose an eigenfunction $f$ of $\widetilde \Delta_{\rho}$ with $|f(y)| \leq 1$ for any $y \in V$ and $\max_ x|f(x)| = 1$.
By \eqref{eq:EstimateModulusAdmittance}, we find
\begin{align*}
|1 - \lambda| |f(x)| &\leq \left| \sum_y \frac{\rho_{xy}}{\rho(x)} f(y) \right| \\
&\leq \sum_y  \frac{|\rho_{xy}|}{|\rho(x)|} |f(y)| \\
&\leq \frac{|s|}{\Re s} \sum_y \frac{\Re \rho_{xy}}{|\rho(x)|} = \frac{|s|}{\Re s} \frac{\Re \rho(x)}{|\rho(x)|}\le  \frac{|s|}{\Re s}.
\end{align*}
\end{proof}
Note how \eqref{eq:EigenvalueEstimateII} generalizes the claim for real $s$. The eigenvalues of the Laplacian in the real case lie in the interval $[0,2]$; hence, the estimate is clearly sharp by comparison with the real case.

Next, we prove Theorem \ref{thm:CircleRegions}:
\begin{proof}[Proof of Theorem \ref{thm:CircleRegions}]
Let ${\lambda=u+iw}$ be an eigenvalue of the normalized complex-weighted Laplacian $\widetilde \Delta_\rho$ with the eigenfunction $f$, i.e, 
\begin{equation*}
\lambda f(x)=\widetilde \Delta_\rho f(x).
\end{equation*}
Then by Green's formula we have
\begin{equation}\label{eveq}
\sum_{x\in V}\lambda|f(x)|^2\rho(x)=\dfrac{1}{2}\sum_{x,y\in V}|\nabla_{xy}f|^2\rho_{xy}.
\end{equation}
Let us write separately real and imaginary parts of the last equality, assuming  ${\rho_{xy}=\tau_{xy}+i\sigma_{xy}}$ and ${\rho(x)=\tau(x)+i\sigma(x)}$. Note that in this case $\tau(x)=\sum_{y}\tau_{xy}$ and $\sigma(x)=\sum_{y}\sigma_{xy}$. 

Then,
\begin{equation*}
\lambda\sum_{x\in V}|f(x)|^2\rho(x)=(u+iw)\sum_{x\in V}|f(x)|^2(\tau(x)+i\sigma(x)),
\end{equation*}
and
\begin{equation*}
\dfrac{1}{2}\sum_{x,y\in V}|\nabla_{xy}f|^2\rho_{xy}=\dfrac{1}{2}\sum_{x,y\in V}|\nabla_{xy}f|^2(\tau_{xy}+i\sigma_{xy}).
\end{equation*}

Therefore, for the real part of \eqref{eveq}, we have
\begin{equation}\label{realpart}
u\sum_{x\in V}|f(x)|^2\tau(x)-w\sum_{x\in V}|f(x)|^2\sigma(x)=\dfrac{1}{2}\sum_{x,y\in V}|\nabla_{xy}f|^2\tau_{xy},
\end{equation}
and for the imaginary part,
\begin{equation}\label{imaginarypart}
u\sum_{x\in V}|f(x)|^2\sigma(x)+w\sum_{x\in V}|f(x)|^2\tau(x)=\dfrac{1}{2}\sum_{x,y\in V}|\nabla_{xy}f|^2\sigma_{xy}.
\end{equation}
Multiplying \eqref{realpart} by $u$ and \eqref{imaginarypart} by $w$ and summing the obtained equalities up, we get

\begin{equation}\label{eq1}
(u^2+w^2)\sum_{x\in V}|f(x)|^2\tau(x)=\dfrac{1}{2}\sum_{x,y\in V}|\nabla_{xy}f|^2(\tau_{xy}u+\sigma_{xy}w).
\end{equation}
Let us estimate the left-hand side, using the obvious inequality 
\begin{equation*}
|\nabla_{xy}f|^2=|f(y)-f(x)|^2\le 2(|f(y)|^2+|f(x)|^2).
\end{equation*}
Moreover, the right-hand side of \eqref{eq1} is positive, this means also the left-hand side is positive and at least one $(\tau_{xy}u+\sigma_{xy}w)$ is positive.

Let $w>0$.
Then, we have
\begin{align*}
(u^2+w^2)\sum_{x\in V}|f(x)|^2\tau(x)&=\dfrac{1}{2}\sum_{x,y\in V}|\nabla_{xy}f|^2\tau_{xy}(u+\dfrac{\sigma_{xy}}{\tau_{xy}}w)\\
&\le \sum_{x,y\in V}(|f(y)|^2+|f(x)|^2)\tau_{xy}(u+\dfrac{\sigma_{xy}}{\tau_{xy}}w) \\
&\le \sum_{x,y\in V}(|f(y)|^2+|f(x)|^2)\tau_{xy}(u+w\max_{x\sim y}\dfrac{\sigma_{xy}}{\tau_{xy}}) \\
&\le 2\sum_{x\in V}|f(x)|^2\tau(x)(u+w\max_{x\sim y}\dfrac{\sigma_{xy}}{\tau_{xy}}) .
\end{align*}
Therefore,
\begin{align*}
u^2+w^2&\le 2(u+w\max_{x\sim y}\dfrac{\sigma_{xy}}{\tau_{xy}}) ,
\end{align*}
i.e.,
\begin{align*}
(u-1)^2+\left(w-\max_{x\sim y}\dfrac{\sigma_{xy}}{\tau_{xy}}\right)^2&\le 1+\left(\max_{x\sim y}\dfrac{\sigma_{xy}}{\tau_{xy}}\right)^2.
\end{align*}

Let $w<0$.
Then, we have
\begin{align*}
(u^2+w^2)\sum_{x\in V}|f(x)|^2\tau(x)&=\dfrac{1}{2}\sum_{x,y\in V}|\nabla_{xy}f|^2\tau_{xy}(u+\dfrac{\sigma_{xy}}{\tau_{xy}}w)\\
&\le \sum_{x,y\in V}(|f(y)|^2+|f(x)|^2)\tau_{xy}(u+\dfrac{\sigma_{xy}}{\tau_{xy}}w) \\
&\le \sum_{x,y\in V}(|f(y)|^2+|f(x)|^2)\tau_{xy}(u+w\min_{x\sim y}\dfrac{\sigma_{xy}}{\tau_{xy}}) \\
&\le 2\sum_{x\in V}|f(x)|^2\tau(x)(u+w\min_{x\sim y}\dfrac{\sigma_{xy}}{\tau_{xy}}) .
\end{align*}
Therefore,
\begin{align*}
u^2+w^2&\le 2(u+w\min_{x\sim y}\dfrac{\sigma_{xy}}{\tau_{xy}}) ,
\end{align*}
i.e.,
\begin{align*}
(u-1)^2+\left(w-\min_{x\sim y}\dfrac{\sigma_{xy}}{\tau_{xy}}\right)^2&\le 1+\left(\min_{x\sim y}\dfrac{\sigma_{xy}}{\tau_{xy}}\right)^2.
\end{align*}

Let $w=0$.
Then, we have
\begin{align*}
u^2\sum_{x\in V}|f(x)|^2\tau(x)&=\dfrac{1}{2}\sum_{x,y\in V}|\nabla_{xy}f|^2\tau_{xy}u\\
&\le 2\sum_{x\in V}|f(x)|^2\tau(x)u .
\end{align*}
Therefore,
\begin{align*}
u^2&\le 2u,
\end{align*}
which means that the real eigenvalues of the normalized complex Laplacian lie in $[0,2]$.

From \eqref{eq:EstimateModulusAdmittance} follows by squaring
\begin{equation*}
\dfrac{\Re^2 \rho_{xy}}{\Re^2 \rho_{xy}+\Im^2 \rho_{xy}}\ge \dfrac{\Re^2 s}{\Re^2 s+\Im^2 s}.
\end{equation*}
This yields
\begin{equation*}
\dfrac{\Im^2 \rho_{xy}}{\Re^2 \rho_{xy}}\le \dfrac{\Im^2 s}{\Re^2 s},
\end{equation*}
and hence,
\begin{equation*}
-\left| \dfrac{\Im s}{\Re s}\right|\le \dfrac{\Im \rho_{xy}}{\Re\rho_{xy}}\le\left| \dfrac{\Im s}{\Re s}\right|.
\end{equation*}
The proof is complete.
\end{proof}

\subsection{Examples and sharpness of the estimates}

With the following example we illustrate the estimates from above and point out how we cannot scale down the radii of the circles in Theorem \ref{thm:CircleRegions}
.
We consider the linear graph $P_4$ with four vertices and edges weighted $s$, $1/s$, $s$.
The complex normalized Laplacian is given by the matrix
\begin{equation}
\label{eq:ComplexLaplacian}
A = 
\begin{pmatrix}
1 & - 1 & 0 & 0 \\
\frac{-s^2}{1+s^2} & 1 & \frac{-1}{s^2 + 1} & 0 \\
0 & \frac{-1}{s^2+1} & 1 & \frac{-s^2}{s^2+1} \\
0 & 0 & -1 & 1
\end{pmatrix}
.
\end{equation}
We find for the eigenvalues
\begin{equation}
\label{eq:EigenvaluesA4}
\frac{1}{1+s^2}, \quad 0, \quad 2, \quad \frac{1+2s^2}{1+s^2}.
\end{equation}
The corresponding eigenvectors are computed as
\begin{equation*}
(-1,\frac{-s^2}{1+s^2}, \frac{s^2}{1+s^2}, 1), \; (1,1,1,1), \; (-1,1,-1,1), \; (1,\frac{-s^2}{1+s^2},\frac{-s^2}{1+s^2}, 1).
\end{equation*}

On the Figure \ref{fig} we green circles correspond to the estimate in Theorem \ref{thm:CircleRegions}, blue circle corresponds to Theorem \ref{thm:EigenvaluesAroundOne}  (
everything is drawn for the case $s=1+2i$) and red dots are the eigenvalues $0$, $2$, $-1/10-2i/10$, $21/10+2i/10$. Note, that in this case there is an eigenvalue with real part, larger than $2$, and an eigenvalue with negative real part.

 Let $s=s_1+is_2, \; s_1,s_2 >0$. We shall show sharpness of Theorem \ref{thm:CircleRegions} considering the eigenvalue
\begin{equation*}
z = 1 + \frac{s^2}{1+s^2}.
\end{equation*}
For eigenvalues of the complex Laplacian with positive imaginary part $w$ and real part $u$ we have shown the estimate
\begin{equation*}
(u-1)^2 + (w-m)^2 \leq 1 + m^2, \quad m = \max \frac{\Im \rho_{xy}}{\Re \rho_{xy}}.
\end{equation*}
Note that in our example the maximum ratio is $m= s_2/s_1$. We show that for any $\varepsilon>0$ the estimate
\begin{equation}
\label{eq:EpsEstimate}
(u-1)^2 + (w-m)^2 \leq 1 + m^2 - \varepsilon,
\end{equation}
fails, choosing a sufficiently large value of $s_1$.

Firstly, we compute
\begin{align*}
(u-1)^2 &= \big( \frac{(s^2 + \bar{s}^2)/2 + |s|^4}{|1+s^2|^2} \big)^2 = \big( \frac{s_1^2 - s_2^2 + (s_1^2+s_2^2)^2}{|1+s^2|^2} \big)^2, \\
(w-m)^2 &= \big( \frac{2s_1 s_2}{|1+s^2|^2} - \frac{s_2}{s_1} \big)^2.
\end{align*}
Multiplying \eqref{eq:EpsEstimate} with $|1+s^2|^4$ we find
\begin{equation*}
(s_1^2 - s_2^2 + (s_1^2 + s_2^2)^2)^2 + (2s_1^2 s_2 - \frac{s_2}{s_1} |1+s^2|^2)^2 \leq |1+s^2|^4 (1+ \frac{s_2^2}{s_1^2} - \varepsilon).
\end{equation*}
Multiplying the above with $s_1^2$ gives
\begin{equation*}
s_1^2(s_1^2 - s_2^2 + (s_1^2 + s_2^2)^2)^2 + s_2^2(2s_1^2 - |1+s^2|^2)^2 \leq |1+s^2|^4 (s_2^2 + (1-\varepsilon)s_1^2).
\end{equation*}
Subtracting $s_2^2 |1+s^2|^4$ on both sides and dividing by $s_1^2$ yields
\begin{equation*}
(s_1^2 - s_2^2 + (s_1^2 + s_2^2)^2 )^2 + 4s_1^2 s_2^2 - 4s_2^2 |1+s^2|^2 \leq (1-\varepsilon) |1+s^2|^4.
\end{equation*}
We consider $s_2 \ll 1 \ll s_1$ such that the highest power of $s_1$ dominates. But on the left-hand side, we find $s_1^8$ and on the right-hand side $(1-\varepsilon) s_1^8$. Therefore, the inequality \eqref{eq:EpsEstimate} fails choosing $s_1$ large enough (for fixed $s_2$).

\begin{figure}[H]
\centering
\begin{tikzpicture}[scale=0.7]

   \path [draw=none,fill=blue!20] (1,0) circle (2.24);
   \path [draw=none,fill=green!20, semitransparent] (1,2) circle (2.24);
   \path [draw=none,fill=green!20, semitransparent] (1,-2) circle (2.24);
    \draw [->] (-2,0) -- (5,0) node [below left]  {$s_1$};
    \draw [->] (0,-5) -- (0,5) node [below right] {$s_2$};

   \node [below left,black] at (0,0) {$0$};
   \node [below,black] at (1,0) {$1$};
    \draw[black, line width = 0.20mm]  (1,-0.1) -- (1,0.1); 
   
\path [draw=none,fill=red] (0,0) circle (0.1);
\path [draw=none,fill=red] (2,0) circle (0.1);
   
\path [draw=none,fill=red] (-0.1,-0.2) circle (0.1);
\path [draw=none,fill=red] (2.1,0.2) circle (0.1);

\end{tikzpicture}
\caption{Location of eigenvalues for $s=1+2i$}
\label{fig}
\end{figure}
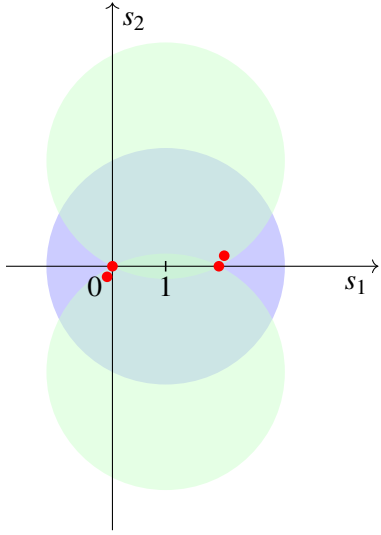

%
%
%
%
%
%
%
%

\section{Estimate for smallest eigenvalue in modulus}
\label{section:SmallestEigenvalue}
For the real normalized Laplacian the smallest positive eigenvalue (also called \emph{spectral gap}) plays a crucial role finding the mixing properties of the associated random walk. In the real case, a lower bound is provided by the \emph{Cheeger constant} (cf. \cite{Chung}). Roughly speaking, this measures the connectedness when removing vertices. Evidently, the geometry of the underlying graph plays the crucial role. Here, we do not know how to generalize the Cheeger constant to the case of complex weights because eigenspaces of eigenvalues are not necessarily orthogonal. We can still prove the lower bound depending on the diameter. We start with a lower bound on $\Re \rho_{xy}$.

\begin{lemma}
\label{lem:LowerBoundAdmittance}
Let
\begin{align*}
C_1 = \min_x \sum_y \big( \frac{1}{L_{xy} + R_{xy} + D_{xy}} \big), \quad C_2 = \sum_{x,y} \frac{1}{L_{xy} + R_{xy} + D_{xy}} .
\end{align*}
Then, we find the following estimate to hold:
\begin{equation}
\label{eq:LowerBoundRealPart}
\begin{split}
&\quad \min_x \Re (\rho(x)) - \frac{| \Im s|}{\Re s} \sum_x \Re (\rho(x)) \\
 &\geq C_1 \Re s \min(|s|^2,|s|^{-4}) - C_2 \frac{|\Im s|}{\Re s} \frac{\max(1,|s|^2)}{\Re s}.
\end{split}
\end{equation}
\end{lemma}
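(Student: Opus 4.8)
The plan is to bound the two summands on the left-hand side of \eqref{eq:LowerBoundRealPart} separately, matching each to the corresponding summand on the right. The term $-\frac{|\Im s|}{\Re s}\sum_x \Re \rho(x)$ carries a minus sign, so I need an \emph{upper} bound on $\sum_x \Re\rho(x)$, whereas for $\min_x \Re\rho(x)$ I need a \emph{lower} bound.

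The upper bound is a direct consequence of Proposition \ref{prop:Rerho}. Since $\Re z \le |z|$ for every $z \in \C$, summing the estimate \eqref{eq:EstimateModulusRho} over $x$ gives $\sum_x \Re\rho(x) \le \sum_x |\rho(x)| \le C_2 \frac{\max(1,|s|^2)}{\Re s}$. Multiplying by $-|\Im s|/\Re s$ reproduces exactly the second term on the right-hand side of \eqref{eq:LowerBoundRealPart}.

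The substance of the lemma is therefore the lower bound $\min_x \Re\rho(x) \ge C_1 \Re s \min(|s|^2,|s|^{-4})$. Because $\Re\rho(x)=\sum_y \Re\rho_{xy}$ and $C_1 = \min_x \sum_y (L_{xy}+R_{xy}+D_{xy})^{-1}$, it suffices to prove the edgewise estimate
\begin{equation*}
\Re\rho_{xy} \ge \frac{\Re s\,\min(|s|^2,|s|^{-4})}{L_{xy}+R_{xy}+D_{xy}};
\end{equation*}
summing over $y$ and minimizing over $x$ then yields the claim. To obtain this estimate I compute $\Re\rho_{xy}$ explicitly: writing $P = L_{xy}s^2 + R_{xy}s + D_{xy}$ and multiplying \eqref{eq:admittance} by $\overline{P}/\overline{P}$, a short calculation gives
\begin{equation*}
\Re\rho_{xy} = \frac{L_{xy}|s|^2\Re s + R_{xy}|s|^2 + D_{xy}\Re s}{|P|^2}.
\end{equation*}

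I then bound the numerator from below and $|P|^2$ from above, splitting into the cases $|s|\ge 1$ and $|s|\le 1$. The triangle inequality gives $|P|\le L_{xy}|s|^2+R_{xy}|s|+D_{xy}$, which collapses to $(L_{xy}+R_{xy}+D_{xy})|s|^2$ when $|s|\ge1$ and to $L_{xy}+R_{xy}+D_{xy}$ when $|s|\le1$. A term-by-term comparison of the numerator against the target then closes each case using only the elementary inequalities $\Re s \le |s|$ and $\Re s \le |s| \le 1$ (in the small-modulus regime): for $|s|\ge1$ one checks $L_{xy}|s|^2\Re s\ge L_{xy}\Re s$, $R_{xy}|s|^2\ge R_{xy}\Re s$ and $D_{xy}\Re s\ge D_{xy}\Re s$, and for $|s|\le1$ the analogous three comparisons against $\Re s\,|s|^2(L_{xy}+R_{xy}+D_{xy})$. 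I expect this numerator casework — verifying that each of the three monomials dominates its counterpart in both regimes — to be the only delicate point; everything else is assembly of Proposition \ref{prop:Rerho} and the definitions of $C_1$ and $C_2$.
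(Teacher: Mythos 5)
Your proposal is correct and follows essentially the same route as the paper: both arguments reduce the first term to the edgewise bound $\Re\rho_{xy}\geq \Re s\,\min(|s|^2,|s|^{-4})/(L_{xy}+R_{xy}+D_{xy})$ by bounding the numerator and denominator of the same rational expression (the paper phrases this as $\Re\rho_{xy}=|\rho_{xy}|^2\,\Re(1/\rho_{xy})$ with $|1/\rho_{xy}|\leq (L_{xy}+R_{xy}+D_{xy})\max(|s|,1/|s|)$, which is algebraically identical to your $\Re(s\overline{P})/|P|^2$ computation), and both handle the second term via $\Re\rho(x)\leq|\rho(x)|$ and \eqref{eq:EstimateModulusRho}. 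Your case analysis $|s|\gtrless 1$ checks out, so there is no gap.
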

\begin{proof}
Let $r = \rho_{xy}$ for brevity. Note that
\begin{equation*}
\Re (r) = |r|^2 \Re \left( \frac{1}{r} \right) = |r|^2 \left( \Re (s) L_{xy} + R_{xy} + D_{xy} \frac{\Re s}{|s|^2} \right).
\end{equation*}
Moreover,
\begin{align*}
\big| \frac{1}{r} \big| &= |L_{xy} s + R_{xy} + \frac{D_{xy}}{s} | \leq L_{xy} |s| + R_{xy} + \frac{D_{xy}}{|s|} \\
&\leq (L_{xy} + R_{xy} + D_{xy}) \max( |s|, 1/|s|).
\end{align*}
Consequently,
\begin{equation*}
\Re (r) \geq \frac{1}{(L_{xy} + R_{xy} + D_{xy})^2 \max(|s|^2,1/|s|^2)} (L_{xy} + R_{xy} + D_{xy}) \min(\Re s, \Re \frac{s}{|s|^2})
\end{equation*}
because $\min(1, \Re s, \Re s/|s|^2) = \min( \Re s, \Re s/|s|^2)$, which can be seen by considering $|s| \leq 1$ and $|s| \geq 1$.
Hence,
\begin{align*}
\Re (\rho_{xy}) &\geq \frac{\Re s}{L_{xy} + D_{xy} + R_{xy}} \frac{\min(1,|s|^{-2})}{\max(|s|^2,|s|^{-2})} \\
&= \frac{\Re (s)}{L_{xy} + R_{xy} + D_{xy}} \min(|s|^2, |s|^{-4}).
\end{align*}
Hence, we find by \eqref{eq:EstimateModulusRho}
\begin{align*}
&\; \min_x \Re (\rho(x)) - \frac{| \Im s|}{\Re s} \sum_x \Re (\rho(x)) \\
&\geq \Re (s) \big[ \min_x \sum_y \big( \frac{1}{L_{xy} + R_{xy} + D_{xy}} \big) \big] \min(|s|^2,|s|^{-4}) - \frac{| \Im s|}{\Re s} \sum_x |\rho(x)| \\
&= C_1 \Re s \min(|s|^2,|s|^{-4}) - C_2 \frac{|\Im s|}{\Re s} \frac{\max(1,|s|^2)}{\Re s}.
\end{align*}
\end{proof}

Thus, for fixed $\Re s$, we can find the above quantity to be positive choosing $\Im s$ sufficiently small only depending on $\Re s$ and the electrical network. Moreover, for $s=1$ the lower bound is attained. We are ready to prove Theorem \ref{thm:Diameter}.

\begin{proof}
Let $\lambda_1$ be the smallest eigenvalue in modulus and let $f$ be a corresponding eigenfunction. We normalize $f$ such that $f(x_0) = 1$ and $\max |f(x)| = 1$. Green's formula yields that
\begin{equation*}
\sum_x \widetilde \Delta_\rho f_1(x) \rho(x) = \lambda_1 \sum_x f_1(x) \rho(x) = 0.
\end{equation*}
Indeed, in \eqref{Green} set $f_2 \equiv 1$ and integrate by parts.
Hence,
\begin{equation*}
\sum_x f_1(x) \rho(x) = 0.
\end{equation*}
Therefore,
\begin{align*}
\Re (\rho(x_0)) + \Re \big( \sum_{x \neq x_0} f_1(x) \rho(x) \big) &= 0 \\
\Leftrightarrow \Re (\rho(x_0)) + \sum_{x \neq x_0} \Re f_1(x) \Re \rho(x) - \sum_{x \neq x_0} \Im f_1(x) \Im \rho(x) &= 0,
\end{align*}
but
\begin{equation*}
\big| \sum_{x \neq x_0} \Im (f_1(x)) \Im (\rho(x)) \big| \leq \sum_{x \neq x_0} |\Im \rho(x)|.
\end{equation*}
Hence, by Lemma \ref{lem:LowerBoundAdmittance} and assumption \eqref{eq:LowerBoundRealpart},
\begin{equation*}
\min \Re \rho(x) - \sum_{x \neq x_0} |\Im \rho| > 0,
\end{equation*}
we find that
\begin{equation*}
\sum_{x \neq x_0} \Re (f_1(x)) \Re \rho(x) < 0 .
\end{equation*}
Since $\Re (\rho(x)) > 0$, there is $x_{n+1}$ such that $\Re (f_1(x_{n+1}))<0$.

Another application of Green's formula gives
\begin{equation*}
\lambda_1 \sum |f(x)|^2 \rho(x) = \frac{1}{2} \sum_{x,y} |f(x)-f(y)|^2 \rho_{xy},
\end{equation*}
which implies
\begin{equation*}
|\lambda_1| = \frac{\frac{1}{2} \big| \sum_{x,y} |f(x)-f(y)|^2 \rho_{xy}}{\big| \sum_x |f(x)|^2 \rho(x) \big|}
\end{equation*}
We estimate the numerator from below by finding a path $x_0,\ldots,x_{n+1}$:
\begin{align*}
&\quad \frac{1}{2} \big| \sum_{x,y} |f(x)-f(y)|^2 \rho_{xy} \big| \\
&\geq \frac{1}{2} \sum_{x,y} |f(x)-f(y)|^2 \Re \rho_{xy} \\
&\geq \frac{1}{2} \sum_{x,y} |f(x)-f(y)|^2 \min \Re \rho_{xy} \\
&\geq \sum_{k=0}^n |f(x_k) - f(x_{k+1})|^2 \min \Re \rho_{xy} \\
&\geq \sum_{k=0}^n | \Re f(x_k) - \Re f(x_{k+1}) |^2 \min \Re \rho_{xy} \\
&\geq \frac{\min \Re \rho_{xy}}{n} \big| \sum_{k=0}^n \Re f(x_k) - \Re f(x_{k+1}) \big|^2 \\
&\geq \frac{\min \Re \rho_{xy}}{n} \geq \frac{\Re s \; C_1 \min(|s|^2,|s|^{-4})}{D}.
\end{align*}
For the denominator we find by \eqref{eq:EstimateModulusRho}
\begin{equation*}
\left| \sum_x |f(x)|^2 \rho(x) \right| \leq \sum_x |\rho(x)| \leq C_2 \frac{\min(1,|s|^2)}{\Re s}.
\end{equation*}
This finishes the proof of Theorem \ref{thm:Diameter}.
\end{proof}

\section*{Acknowledgements}
The second author acknowledges financial support by the Deutsche Forschungsgemeinschaft (DFG, Deutsche Forschungsgemeinschaft) Project-ID 258734477 -\\ SFB 1173.

\bibliographystyle{plain}

\end{document}